\newtheorem{theorem}{Theorem}
\newtheorem{proposition}[theorem]{Proposition}
\newtheorem{corollary}[theorem]{Corollary}
\theoremstyle{definition}
\theoremstyle{remark}
\newtheorem{remark}[theorem]{Remark}
\newcommand{\bea}{\begin{eqnarray*}}
\newcommand{\eea}{\end{eqnarray*}}
\begin{document}
\title {The higher duals  of certain class of  Banach algebras}
\author {A.R. KHODDAMI and H.R. EBRAHIMI  VISHKI}
\subjclass[2000]{46H20, 46H25} \keywords{ Arens product,
$n$-weak amenability, semi-simplicity}
\address{Department of Pure Mathematics, Ferdowsi University of Mashhad, P.O. Box 1159, Mashhad 91775, Iran} \email{khoddami.alireza@yahoo.com}
\address{Department of Pure Mathematics and  Center of Excellence
in Analysis on Algebraic Structures (CEAAS), Ferdowsi University of
Mashhad, P.O. Box 1159, Mashhad 91775, Iran.}
\email{vishki@um.ac.ir}
\begin{abstract}
Given a Banach space $A$ and fix a non-zero $\varphi\in A^*$ with $\|\varphi\|\leq 1$. Then the product $a\cdot b=\langle\varphi, a\rangle\ b$ turning $A$ into a Banach algebra which will be denoted by $_\varphi A.$
Some of the main properties of $_\varphi A$ such as Arens regularity, $n$-weak amenability and semi-simplicity   are investigated.
\end{abstract}
\maketitle
\section{introduction}
This paper had its genesis in a simple beautiful example of Zhang {\cite[Page 507]{Z}}. For an infinite set $S$  he equipped $\l^1(S)$ with the algebra product $a\cdot b=a(s_0) b \ \ (a, b\in\l^1(S))$, where $s_0$ is a fixed element of $S$. He used this as a Banach algebra which is $(2n-1)-$weakly amenable but is not $(2n)-$weakly amenable for any $n\in\mathbb{N}.$  Here we study a more general form of this example. Indeed, we equip  a non-trivial product on a
general  Banach space turning it to a Banach algebra whose  study  has significance in
many respects. For instance,  it can serve as a source of
 (counter-)examples for various purposes in functional
analysis.

Given a Banach space $A$ and fix a non-zero $\varphi\in A^*$ with $\|\varphi\|\leq 1$. Then the product $a\cdot b=\langle\varphi, a\rangle b$ turning $A$ into a Banach algebra which will be denoted by $_\varphi A.$ Trivially $_\varphi A$ has a left identity (indeed, every $e\in\ _\varphi A$ with $\langle\varphi, e\rangle=1$ is a left identity), while it has no bounded approximate identity in the case where $\dim(A)\geq 2.$ Now the Zhang's example can be interpreted as an special case of ours. Indeed, he studied  $_{\varphi_{s_0}}\l^1(S),$ where ${\varphi_{s_0}}\in\l^\infty(S)$ is the characteristic at $s_0$. Here, among other things, we focus on the higher duals of $_\varphi A$ and investigate various notions of amenability for $_\varphi A$. In particular, we prove that for every $n\in\mathbb{N},$ $_\varphi A$ is  $(2n-1)-$weakly amenable but it is not $(2n)-$weakly amenable for any $n$, in the case where $\dim(\ker\varphi)\geq 2$.
\section{ The results}
 Before we proceed for the main results we  need a bit preliminaries. As we shall be concerned  to the Arens products $\square$ and  $\lozenge$ on the bidual $A^{**}$ of a Banach algebra $A$, let us to  introduce these products in the following steps.

 Let $a,b\in A, f\in A^*$  and    $m,n\in  A^{**}.$
$$\begin{array}{ll}
\langle f\cdot a,b \rangle= \langle f,ab \rangle       & \ \ \ \ \ \ \  \langle b,a\cdot f \rangle=\langle ba,f \rangle\\
\langle n\cdot f,a \rangle=\langle n,f\cdot a \rangle   &  \ \ \ \ \ \ \ \langle a,f\cdot n \rangle=\langle a\cdot f,n \rangle\\
\langle m\square n,f \rangle=\langle m,n\cdot f \rangle & \ \ \ \ \ \ \ \langle f,m\lozenge n\rangle=\langle f\cdot m,n \rangle.
\end{array}$$
If $\square$ and  $\lozenge$ coincide on the whole of $A^{**}$ then  $A$ is called Arens regular.
 For the brevity  of notation we use the same symbol $``\cdot "$
for the various module operations linking $A$, such as $A^*$, $A^{**}$ and also as well for the $n^{\rm th}$ dual $A^{(n)}, (n\in\mathbb{N}).$ The main properties of these products and various $A-$module operations are detailed in \cite{A}; see also \cite{MV}.

A derivation from a Banach algebra $A$ to a
  Banach $A$-module $X$ is a bounded linear mapping $D:A\rightarrow X$
  such that  $D(ab)=D(a)b+aD(b)\ \ \ (a,b\in A)$. For each $x\in
  X$ the  mapping $\delta_x:a\rightarrow ax-xa$,  $(a \in A)$  is a
  bounded derivation, called an inner derivation. The concept of $n$-weak amenability was introduced  and intensively studied  by Dales {\it et al.} \cite{DGG}. A Banach algebra ${\mathcal
A}$ is said to be $n$-weakly amenable ($n\in\mathbb{N}$) if every
 derivation from ${\mathcal A}$ into ${\mathcal A}^{(n)}$ is
inner. Trivially, $1$-weak amenability is nothing else than weak
amenability.
  A derivation $D: A\rightarrow A^*$ is called   cyclic if
  $\langle D(a),b \rangle+\langle D(b),a \rangle=0 $  $(a,b\in A)$. If every bounded cyclic  derivation from $A$ to $A^*$  is inner then $A$ is called cyclicly
  amenable which was studied by Gr{\o}nb{ae}k \cite{G}.
 Throughout the paper we usually identify an element of a space with its canonical image in its second dual.\\

Now we come to $_\varphi A.$
A direct verification reveals that for  $a\in A, f\in (_\varphi A)^*$  and    $m,n\in  (_\varphi A)^{**},$
$$\begin{array}{ll}
 f\cdot a = \langle\varphi, a\rangle f         & \ \ \ \ \ \ \  a\cdot f =\langle f, a\rangle\varphi\\
 n\cdot f = \langle n, f\rangle\varphi   &  \ \ \ \ \ \ \ f\cdot n =\langle n,\varphi\rangle f\\
m\square n = \langle m, \varphi\rangle n & \ \ \ \ \ \ \ m\lozenge n = \langle m, \varphi\rangle n.
\end{array}$$
 The same calculation gives  the $_\varphi A-$module operations of $(_\varphi A)^{(2n-1)}$ and $(_\varphi A)^{(2n)}$  as follows,
$$\begin{array}{ll}
 F\cdot a = \langle\varphi, a\rangle F        & \ \ \ \ \ \ \  a\cdot F =\langle F, a\rangle\varphi  \ \ \ \ \ (F\in (_\varphi A)^{(2n-1)})\\
 G\cdot a = \langle G, \varphi\rangle  a &  \ \ \ \ \ \ \ a\cdot G =\langle\varphi, a\rangle G  \ \ \ \ \ \ \ (G\in
(_\varphi A)^{(2n)}).\\
\end{array}$$

${}$\\

We commence with the next straightforward result, most parts of which are  based on the latter observations on the various duals of $_\varphi A.$
\begin{proposition}
$(i)$ $_\varphi A$ is Arens regular and $({_\varphi A})^{**}=\ _\varphi (A^{**}).$  Furthermore,  for each $n\in\mathbb{N},$ $({_\varphi A})^{(2n)}$ is Arens regular.

$(ii)$  $(_\varphi A)^{**}\cdot\ _\varphi A=\ _\varphi A$ and $_\varphi A\cdot(_\varphi A)^{**}=(_\varphi A)^{**};$ in particular, $_\varphi A$ is a left  ideal of $(_\varphi A)^{**}.$

$(iii)$   $(_\varphi A)^*\cdot\ _\varphi A=(_\varphi A)^*$ and  $_\varphi A\cdot
(_\varphi A)^*=\mathbb{C}\varphi.$

\end{proposition}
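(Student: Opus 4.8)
The plan is to read every assertion directly off the multiplication tables for the various duals of $_\varphi A$ displayed immediately above the statement; essentially no computation beyond those tables is needed.

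For part $(i)$, I would first note that the two Arens products coincide identically, since $m\square n=\langle m,\varphi\rangle n=m\lozenge n$ for all $m,n\in(_\varphi A)^{**}$; hence $_\varphi A$ is Arens regular. To identify the bidual, observe that the functional $m\mapsto\langle m,\varphi\rangle$ on $A^{**}$ is exactly the canonical image $\widehat\varphi$ of $\varphi$ in $A^{***}$, and that the canonical embedding is isometric, so $\|\widehat\varphi\|=\|\varphi\|\le 1$. Thus the formula $m\square n=\langle\widehat\varphi,m\rangle n$ says precisely that $(_\varphi A)^{**}=\ _{\widehat\varphi}(A^{**})$, which under the standing identification is $_\varphi(A^{**})$. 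The claim about $(_\varphi A)^{(2n)}$ then follows by iteration: for any Banach space $B$ and any $\psi\in B^*$ with $\|\psi\|\le 1$ the algebra $_\psi B$ is Arens regular by the same one-line computation, and its bidual is again of the same type, namely $_{\widehat\psi}(B^{**})$. An easy induction on $n$ then gives $(_\varphi A)^{(2n)}=\ _{\varphi^{(2n)}}(A^{(2n)})$ for the appropriate iterated canonical image $\varphi^{(2n)}$ of $\varphi$, and in particular $(_\varphi A)^{(2n)}$ is Arens regular.

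For part $(ii)$, identify $a\in\ _\varphi A$ with its canonical image in $(_\varphi A)^{**}$; the product table gives $m\cdot a=\langle m,\varphi\rangle a$ and $a\cdot m=\langle\varphi,a\rangle m$ for $m\in(_\varphi A)^{**}$. Since $\varphi\ne 0$, the scalar $\langle m,\varphi\rangle$ runs over all of $\mathbb{C}$ as $m$ ranges over $A^{**}$, and $\langle\varphi,a\rangle$ runs over all of $\mathbb{C}$ as $a$ ranges over $A$; therefore the set $\{\,\langle m,\varphi\rangle a\,\}$ equals $\{\lambda a:\lambda\in\mathbb{C},\ a\in A\}=A=\ _\varphi A$, and likewise $_\varphi A\cdot(_\varphi A)^{**}=(_\varphi A)^{**}$. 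Since $_\varphi A$ is norm-closed in $(_\varphi A)^{**}$ (the canonical embedding is isometric and $A$ is complete) and is absorbed under left multiplication, it is a left ideal of $(_\varphi A)^{**}$.

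For part $(iii)$, read off $f\cdot a=\langle\varphi,a\rangle f$ and $a\cdot f=\langle f,a\rangle\varphi$ for $f\in(_\varphi A)^*$ and $a\in\ _\varphi A$. As before $\langle\varphi,a\rangle$ exhausts $\mathbb{C}$, so $(_\varphi A)^*\cdot\ _\varphi A=(_\varphi A)^*$; and $\langle f,a\rangle$ exhausts $\mathbb{C}$ as well, so $_\varphi A\cdot(_\varphi A)^*=\mathbb{C}\varphi$. There is no genuine obstacle here: the only points demanding a moment's care are the isometry of the canonical embeddings, which is what propagates the norm bound $\|\varphi\|\le1$ through all the dual levels in $(i)$, and the bookkeeping of iterated canonical images in the induction there.
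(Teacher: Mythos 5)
Your proposal is correct and follows exactly the route the paper intends: the paper offers no separate proof, declaring the proposition "straightforward" from the displayed multiplication tables, and every one of your steps is just a direct reading of those tables (the only genuinely non-trivial observations being that $m\square n=\langle m,\varphi\rangle n=m\lozenge n$ identifies $(_\varphi A)^{**}$ with $_{\widehat\varphi}(A^{**})$, and that the norm bound $\|\widehat\varphi\|\le 1$ propagates through the isometric canonical embeddings so the induction for the higher even duals goes through). Nothing is missing.
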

As $_\varphi A$ has no approximate identity, in general, it is not amenable.
The next result  investigates  $n-$weak amenability of $_\varphi A$.
\begin{theorem}\label{k} For each $n\in\mathbb{N},  $ $_\varphi A$ is  $(2n-1)$-weakly
amenable, while in the case where $\dim(\ker\varphi)\geq 2,$  $_\varphi A$  is not $(2n)-$weakly amenable for any $n\in\mathbb{N}$.
\end{theorem}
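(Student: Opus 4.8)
The plan is to handle the odd and even cases completely separately, since they have opposite conclusions. For the odd case, the goal is to show every derivation $D:{_\varphi A}\to ({_\varphi A})^{(2n-1)}$ is inner. Using the module action recorded above, namely $F\cdot a=\langle\varphi,a\rangle F$ and $a\cdot F=\langle F,a\rangle\varphi$ for $F\in({_\varphi A})^{(2n-1)}$, I would write out the derivation identity $D(a\cdot b)=D(a)\cdot b+a\cdot D(b)$ with $a\cdot b=\langle\varphi,a\rangle b$. This becomes $\langle\varphi,a\rangle D(b)=\langle\varphi,b\rangle D(a)+\langle D(b),a\rangle\varphi$. First substitute $b=e$ with $\langle\varphi,e\rangle=1$ to get a formula expressing $D(a)$ in terms of $D(e)$, $\langle\varphi,a\rangle$ and $\langle D(a),e\rangle$; then feed that back to pin down the structure of $D$. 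I expect $D$ to turn out to be of the form $D(a)=a\cdot F_0$ for a suitable $F_0$ (equivalently, a scalar multiple of $\varphi$-valued plus an inner part), so that $D=\delta_{F_0}$ once one checks the right action is trivial enough; the key point is that the range lands in $\mathbb{C}\varphi$ up to an inner derivation, and $\mathbb{C}\varphi$ is spanned by a single element on which the computation is explicit.

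For the even case, the strategy is to exhibit an explicit bounded derivation $D:{_\varphi A}\to({_\varphi A})^{(2n)}$ that is not inner. Here the module action is $G\cdot a=\langle G,\varphi\rangle a$ and $a\cdot G=\langle\varphi,a\rangle G$ for $G\in({_\varphi A})^{(2n)}$. A natural candidate is $D=\delta$-like but built from a linear functional vanishing on $\ker\varphi$ combined with the canonical embedding; concretely I would look for $D(a)=\psi(a)\,G_0-\langle\varphi,a\rangle H_0$ type expressions, or more simply try $D(a)=\Lambda(a)$ for a fixed bounded linear map $\Lambda:A\to A^{(2n)}$ chosen so that the derivation identity $D(\langle\varphi,a\rangle b)=\langle D(a),\varphi\rangle b+\langle\varphi,a\rangle D(b)$ holds. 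The hypothesis $\dim(\ker\varphi)\ge 2$ is what gives room to build such a $\Lambda$ that cannot be realized as $a\mapsto a\cdot G-G\cdot a=\langle\varphi,a\rangle G-\langle G,\varphi\rangle a$ for any fixed $G$: the inner derivations form a very restricted family (essentially parametrized by one scalar $\langle G,\varphi\rangle$ and the direction $G$), so picking $D$ with range exploiting two independent directions in $\ker\varphi$ defeats all of them.

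The main obstacle I anticipate is the even case: constructing the non-inner derivation requires choosing the target element(s) in the higher even dual carefully and then verifying non-innerness against the full parametrized family of inner derivations $\delta_G$, $G\in({_\varphi A})^{(2n)}$. The cleanest route is probably to first do $n=1$, $D:{_\varphi A}\to({_\varphi A})^{**}=\ _\varphi(A^{**})$, using two linearly independent vectors $x,y\in\ker\varphi$ and a functional separating them, define $D(a)=\langle\varphi,a\rangle x$ (viewed in $A^{**}$) — check it is a derivation since both sides of the identity reduce to $\langle\varphi,a\rangle\langle\varphi,b\rangle x$ using $\langle x,\varphi\rangle=0$ — and then show $\delta_G(a)=\langle\varphi,a\rangle G-\langle G,\varphi\rangle a$ can never equal $\langle\varphi,a\rangle x$ for all $a$, because matching the $\langle\varphi,a\rangle$-coefficients forces $G-\langle G,\varphi\rangle(\text{something})$ relations incompatible with $a$ ranging over a $\ge 2$-dimensional kernel. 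Once $n=1$ works, the module-action formulas show the higher even duals behave identically, so the same $D$ (composed with canonical embeddings) works verbatim for all $n$; the odd case is genuinely routine by comparison, the only care being the bookkeeping of which slot the pairing $\langle D(b),a\rangle$ occupies.
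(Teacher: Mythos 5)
Your treatment of the odd case, once fleshed out, is essentially the paper's argument and would go through: from $\langle\varphi,a\rangle D(b)=\langle\varphi,b\rangle D(a)+\langle D(b),a\rangle\varphi$ one pairs both sides with $a$ to get $\langle\varphi,b\rangle\langle D(a),a\rangle=0$, hence the cyclicity $\langle D(a),b\rangle=-\langle D(b),a\rangle$, and then $D(b)=\langle\varphi,b\rangle D(e)-\langle D(e),b\rangle\varphi=\delta_{-D(e)}(b)$. Equivalently, in your language, $D-\delta_{-D(e)}$ is a $\mathbb{C}\varphi$-valued derivation and any such derivation must vanish. The one step your sketch glosses over is precisely how to identify $\langle D(b),e\rangle$ with $-\langle D(e),b\rangle$; without the cyclicity (or the "$\mathbb{C}\varphi$-valued derivations are zero" observation) the substitution $b=e$ alone does not close the argument.

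The even case contains a genuine error. Your explicit candidate $D(a)=\langle\varphi,a\rangle x$ with $x\in\ker\varphi$ is an \emph{inner} derivation: taking $G=x$, canonically embedded in $({_\varphi A})^{(2n)}$, one has $\langle G,\varphi\rangle=\langle\varphi,x\rangle=0$, so $\delta_x(a)=\langle\varphi,a\rangle x-\langle x,\varphi\rangle a=\langle\varphi,a\rangle x=D(a)$. Thus your claim that $\delta_G$ can never equal $\langle\varphi,a\rangle x$ is false, and indeed your construction never uses the hypothesis $\dim(\ker\varphi)\geq 2$: your $D$ vanishes on $\ker\varphi$, which is exactly what every $\delta_G$ with $\langle G,\varphi\rangle=0$ does. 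The structural fact you need is that the restriction of any inner derivation $\delta_G$ to $\ker\varphi$ is the scalar map $a\mapsto-\langle G,\varphi\rangle a$; hence a non-inner derivation must act on $\ker\varphi$ in a way that is neither zero nor a nonzero scalar multiple of the identity, and this is where $\dim(\ker\varphi)\geq2$ enters. The paper's choice achieves this: pick $f\in({_\varphi A})^*$ linearly independent of $\varphi$ and $b_0\in\ker\varphi$ with $\langle f,b_0\rangle=1$, and set $D(a)=\langle f-\varphi,a\rangle b_0$, which is a derivation because $\langle b_0,\varphi\rangle=0$. If $D=\delta_m$, evaluating at $b_0$ forces $\langle m,\varphi\rangle=-1$, and then for $a\in\ker\varphi$ one gets $\langle f,a\rangle b_0=a$, which would make $\ker\varphi$ one-dimensional. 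Your reduction from the $(2n)$th dual to the second dual (via the identical module formulas on all even duals, or via the cited hereditary property of $n$-weak amenability) is fine once the $n=1$ counterexample is repaired.
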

\begin{proof} Let $D:\ _\varphi A \rightarrow (_\varphi A)^{(2n-1 )}$ be a
derivation and  let $a,b\in\ _\varphi A.$ Then
$$\langle\varphi
, a\rangle D(b)=D(ab)=D(a)b+aD(b)=\langle\varphi, b\rangle D(a)+\langle D(b), a\rangle\varphi.$$ It follows
that  $\langle\varphi
, a\rangle\langle D(b), a\rangle=\langle\varphi
, b\rangle\langle D(a), a\rangle+ \langle\varphi
, a\rangle\langle D(b), a\rangle,$
  from which we have $\langle D(a),a \rangle= 0$, or equivalently,
\ $\langle D(a+b), a+b\rangle=0.$ Therefore $\langle D(a), b\rangle=-\langle D(b), a\rangle$. Now with $e$ as a left identity for $_\varphi A$  we have
$$D(b)=D(eb)=\langle\varphi, b\rangle D(e)+\langle D(b), e\rangle\varphi=\langle\varphi, b\rangle D(e)-\langle D(e), b \rangle\varphi=
\delta_{-D(e)}(b).$$ Therefore $D$ is inner, as required.

To prove that $_\varphi A$ is not $(2n)-$weakly amenable for any $n\in\mathbb{N}$,  it is enough  to show that
$_\varphi A$  is not 2-weakly amenable, {\cite[Proposition 1.2]{DGG}}. To this end let $f\in
(_\varphi A)^*$ such that $f$  and   $\varphi$ are linearly
independent. It follows that
$\langle f, a_0\rangle=\langle\varphi, b_0\rangle=0$ and $\langle f, b_0\rangle=\langle\varphi, a_0\rangle=1,$ for some $a_0, b_0\in\ _\varphi A$.  Define $D:\ _\varphi A\rightarrow
(_\varphi A)^{**}$ by $D(a)=\langle f-\varphi, a\rangle b_0$, then  $D$ is a
derivation. If there exists $m\in (_\varphi A)^{**}$ \ with  \
$D(a)=am-ma$\ $(a\in\ _\varphi A)$ then  $ b_0=-\langle m, \varphi\rangle b_0$ which follows that
$1=-\langle m, \varphi\rangle$. Now if $a\in \ker\varphi$ then
$\langle f, a\rangle b_0=-\langle m, \varphi\rangle a=a.$   It follows that $\dim(\ker\varphi)=1$
that is a contradiction.
\end{proof}
As an immediate consequence of Theorem \ref{k} we give the next result of Zhang.
\begin{corollary} [{\cite[Page 507]{Z}}] For each $n\in\mathbb{N},$ $_{\varphi_{s_0}}\l^1(S)$ is $(2n-1)-$weakly amenable, while it is not $(2n)-$weakly amenable for any $n\in\mathbb{N}$.
 \end{corollary}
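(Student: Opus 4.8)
The plan is to read this corollary off from Theorem~\ref{k} by recognising Zhang's algebra as a special instance of the construction $_\varphi A$. First I would take $A=\l^1(S)$ as the underlying Banach space and let $\varphi=\varphi_{s_0}\in\l^\infty(S)=(\l^1(S))^*$ be the characteristic function of $\{s_0\}$, so that $\langle\varphi_{s_0},a\rangle=a(s_0)$ for every $a\in\l^1(S)$. Then $\varphi_{s_0}$ is non-zero with $\|\varphi_{s_0}\|_{\l^\infty(S)}=1\le 1$, so it is an admissible functional for the definition of $_\varphi A$, and the associated product $a\cdot b=\langle\varphi_{s_0},a\rangle b=a(s_0)\,b$ is exactly the product Zhang placed on $\l^1(S)$. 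Hence, as Banach algebras, $_{\varphi_{s_0}}\l^1(S)={_\varphi A}$ for this choice of $A$ and $\varphi$.

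Granting this identification, the first assertion is immediate: by the first part of Theorem~\ref{k}, $_\varphi A$ is $(2n-1)$-weakly amenable for every $n\in\mathbb{N}$, hence so is $_{\varphi_{s_0}}\l^1(S)$. For the second assertion I only need to verify the hypothesis $\dim(\ker\varphi_{s_0})\ge 2$ of Theorem~\ref{k}. This is clear, since $\ker\varphi_{s_0}=\{a\in\l^1(S):a(s_0)=0\}$ contains the point masses $\delta_s$ for all $s\in S\setminus\{s_0\}$, and $S$ is infinite; indeed $\ker\varphi_{s_0}$ is then infinite-dimensional, and already $|S|\ge 3$ would suffice. The second part of Theorem~\ref{k} now yields that $_{\varphi_{s_0}}\l^1(S)$ is not $(2n)$-weakly amenable for any $n\in\mathbb{N}$, completing the proof.

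Since the whole argument is a matter of matching definitions together with a one-line dimension count, there is no genuine obstacle here; the only point that needs (minimal) care is checking $\dim(\ker\varphi_{s_0})\ge 2$, which is precisely where the infinitude of $S$ — or any hypothesis forcing $|S|\ge 3$ — is used. Alternatively one could bypass Theorem~\ref{k} and reprove the statement directly by rerunning the two computations in its proof with the concrete functionals $\varphi_{s_0}$ and $\delta_{s_1}$ (for some $s_1\ne s_0$, playing the role of the functional $f$) and the point masses $\delta_{s_0},\delta_{s_1}\in\l^1(S)$; but the specialization route is shorter and is what the phrase ``immediate consequence of Theorem~\ref{k}'' suggests.
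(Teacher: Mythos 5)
Your proposal is correct and matches the paper's intent exactly: the paper gives no separate proof, stating the corollary as an immediate consequence of Theorem~\ref{k} via the identification $_{\varphi_{s_0}}\l^1(S)={}_\varphi A$ with $\varphi=\varphi_{s_0}$, and your verification that $\|\varphi_{s_0}\|\le 1$ and $\dim(\ker\varphi_{s_0})\ge 2$ (from $S$ infinite) supplies precisely the hypotheses needed.
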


\begin{proposition}
A bounded linear map $D:\ _\varphi A \rightarrow (_\varphi A)^{(2n)}, (n\in\mathbb{N}),$ is a
derivation if and only if $D(_\varphi A)\subseteq \ker\varphi$.
\end{proposition}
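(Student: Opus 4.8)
The plan is to translate the derivation identity into a condition on $D$ using the explicit module actions of $(_\varphi A)^{(2n)}$ displayed just above Proposition 2.1, namely $a\cdot G=\langle\varphi,a\rangle G$ and $G\cdot a=\langle G,\varphi\rangle a$ for $a\in {}_\varphi A$ and $G\in (_\varphi A)^{(2n)}$. So for a bounded linear $D:{}_\varphi A\to (_\varphi A)^{(2n)}$ and $a,b\in {}_\varphi A$ one has $a D(b)+D(a)b=\langle\varphi,a\rangle D(b)+\langle D(a),\varphi\rangle b$, while $D(ab)=D(\langle\varphi,a\rangle b)=\langle\varphi,a\rangle D(b)$. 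Hence $D$ is a derivation precisely when $\langle\varphi,a\rangle D(b)=\langle\varphi,a\rangle D(b)+\langle D(a),\varphi\rangle b$ for all $a,b$, i.e. $\langle D(a),\varphi\rangle b=0$ for all $a,b\in {}_\varphi A$.

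From here I would argue both directions. For the "if" direction: if $D({}_\varphi A)\subseteq\ker\varphi$ — interpreting $\ker\varphi$ inside $(_\varphi A)^{(2n)}$ as the annihilator of the canonical image of $\varphi$, i.e. those $G$ with $\langle G,\varphi\rangle=0$ — then $\langle D(a),\varphi\rangle=0$ for every $a$, so the displayed identity holds and $D$ is a derivation. For the "only if" direction: if $D$ is a derivation, pick any $b_0\in {}_\varphi A$ with $b_0\neq 0$ (this exists since $\dim A\geq 1$; indeed $\varphi\neq 0$ forces $A\neq\{0\}$), and the identity $\langle D(a),\varphi\rangle b_0=0$ forces $\langle D(a),\varphi\rangle=0$ for all $a$, which is exactly $D({}_\varphi A)\subseteq\ker\varphi$.

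The only genuinely delicate point is bookkeeping about what "$\ker\varphi$" means when $D$ lands in the higher dual $(_\varphi A)^{(2n)}$ rather than in $A$ itself: one must read it as $\{G\in (_\varphi A)^{(2n)}:\langle G,\varphi\rangle=0\}$, using the canonical embedding $\varphi\in A^*\hookrightarrow (_\varphi A)^{(2n-1)}$ from the paper's standing identification convention. Once that is fixed, the computation of $D(ab)$ from linearity and the product $a\cdot b=\langle\varphi,a\rangle b$, together with the two module-action formulas, makes both implications immediate; there is no real obstacle beyond carefully substituting the correct actions. I would present it as: compute $D(a)b+aD(b)$ and $D(ab)$ separately, subtract, and read off that the derivation condition is equivalent to $\langle D(a),\varphi\rangle=0$ for all $a$, then conclude.
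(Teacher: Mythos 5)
Your argument is correct and follows exactly the paper's route: compute $D(ab)=\langle\varphi,a\rangle D(b)$ and $D(a)b+aD(b)=\langle D(a),\varphi\rangle b+\langle\varphi,a\rangle D(b)$ from the displayed module actions, and read off that the derivation identity is equivalent to $\langle D(a),\varphi\rangle=0$ for all $a$. Your remark that $\ker\varphi$ must be read via the canonical image of $\varphi$ in the odd dual matches the paper's closing note that $\varphi$ is regarded as an element of $(_\varphi A)^{(2n+1)}$.
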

\begin{proof}
A direct verification shows that   $D:\ _\varphi A\rightarrow (_\varphi A)^{(2n)}$ is a derivation if and only if
$$\langle\varphi
, a\rangle D(b)=D(ab)=D(a)b+aD(b)=\langle D(a), \varphi\rangle b+\langle\varphi, a\rangle D(b)\ \ \ \ \ (a,b\in\ _\varphi A).$$
And this is equivalent to $\langle D(a),\varphi \rangle= 0, \ (a\in\ _\varphi A);$ that is $D(_\varphi A)\subseteq
\ker\varphi$. Note that here $\varphi$ is assumed to be an element of $(_\varphi A)^{(2n+1)}.$
\end{proof}
The next results demonstrates that in contrast to Theorem \ref{k},  $_\varphi A$ is $(2n)-$weakly amenable  in the case where $\dim(\ker\varphi)< 2.$
\begin{proposition}\label{p} If $\dim(\ker\varphi)< 2 $ then $_\varphi A$  is $(2n)-$weakly amenable for each $n\in\mathbb{N}$.
\end{proposition}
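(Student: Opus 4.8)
The plan is to combine the criterion established in the preceding proposition---a bounded linear map $D:\ _\varphi A\to (_\varphi A)^{(2n)}$ is a derivation if and only if $\langle\varphi,D(a)\rangle=0$ for all $a\in\ _\varphi A$ (with $\varphi$ regarded as an element of $(_\varphi A)^{(2n+1)}$)---with the explicit $(_\varphi A)^{(2n)}$-module actions $a\cdot G=\langle\varphi,a\rangle G$ and $G\cdot a=\langle G,\varphi\rangle a$ recorded above. Thus every derivation $D$ maps $_\varphi A$ into the hyperplane $N:=\{G\in (_\varphi A)^{(2n)}:\langle\varphi,G\rangle=0\}$. Since $\dim(\ker\varphi)<2$ forces $\dim A\le 2$, all the iterated duals are finite-dimensional and reflexive of the same dimension, so $N$ coincides with the canonical image of $\ker\varphi\subseteq A$; in particular $\dim N\le 1$.

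If $N=\{0\}$ (the case $\dim(\ker\varphi)=0$, so $A$ is one-dimensional) every derivation vanishes and is trivially inner. So assume $\dim(\ker\varphi)=1$; fix a nonzero $a_0\in\ker\varphi$, so that $N=\mathbb{C}a_0$, and fix $e\in\ _\varphi A$ with $\langle\varphi,e\rangle=1$. Given a derivation $D$, since $D(a)\in\mathbb{C}a_0$ for every $a$ we may write $D(a)=\langle h,a\rangle a_0$ for a uniquely determined $h\in (_\varphi A)^*$ (boundedness of $h$ is automatic). I would then claim that $D=\delta_{G}$ with
$$G:=\langle h,e\rangle a_0-\langle h,a_0\rangle e\in\ _\varphi A\subseteq (_\varphi A)^{(2n)}.$$
Indeed $\langle G,\varphi\rangle=-\langle h,a_0\rangle$ by the choices $\langle\varphi,e\rangle=1$ and $\langle\varphi,a_0\rangle=0$; writing an arbitrary $a\in\ _\varphi A$ as $\langle\varphi,a\rangle e+(a-\langle\varphi,a\rangle e)$, whose second summand lies in $\ker\varphi=\mathbb{C}a_0$, a short computation using $\delta_G(a)=a\cdot G-G\cdot a=\langle\varphi,a\rangle G-\langle G,\varphi\rangle a$ yields $\delta_G(a)=\langle h,a\rangle a_0=D(a)$. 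Hence $D$ is inner and $_\varphi A$ is $(2n)$-weakly amenable.

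The one point that needs genuine care---the step I would treat as the crux---is the identification of the hyperplane $N\subseteq (_\varphi A)^{(2n)}$ with the canonical image of $\ker\varphi\subseteq A$; this is exactly where the hypothesis $\dim(\ker\varphi)<2$ is used (finite-dimensionality of all the duals together with the compatibility of $\varphi$ under the canonical embeddings), and it is what pins the range of an arbitrary derivation onto the single line $\mathbb{C}a_0$. Once that reduction is in place, producing the implementing element $G$ and checking $\delta_G=D$ is the routine bookkeeping indicated above.
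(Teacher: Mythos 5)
Your proof is correct and follows essentially the same route as the paper: both reduce to the case $\dim A=2$, use the preceding proposition to pin the range of $D$ onto the line $\mathbb{C}a_0=\ker\varphi$ inside $(_\varphi A)^{(2n)}$, and exhibit the same implementing element $\langle h,e\rangle a_0-\langle h,a_0\rangle e$. Your explicit attention to identifying the hyperplane $N$ with the canonical image of $\ker\varphi$ (via finite-dimensionality) makes precise a step the paper leaves implicit, but the argument is the same.
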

\begin{proof} The only reasonable  case  that we need to verify  is $\dim(\ker\varphi)=1.$ In this case we have $\dim (A)=2$.  Therefore one  may assume   that $A$ is generated by two elements $e, a\in A$ such that $\langle\varphi, e\rangle=1$ and $\langle\varphi, a\rangle=0.$ Let $f\in
(_\varphi A)^*$ satisfy  $\langle f, e\rangle=0$ and $\langle f, a\rangle=1.$ Then $f$  and   $\varphi$ are  linearly independent and generate $A^*$; indeed, every  non-trivial element $g\in A^*$ has the form $g=\langle g, e\rangle\varphi+\langle g, a\rangle f$.  Let $D:\ _\varphi A\rightarrow (_\varphi A)^{(2n)}$ be a derivation then as Proposition \ref{p} demonstrates $D(_\varphi A)\subseteq \ker\varphi.$ Therefore $D(x)=\langle g, x\rangle a, (x\in\ _\varphi A)$, for some $g\in (_\varphi A)^*.$ As for every $x\in\ _\varphi A,$  $x=\langle\varphi, x\rangle e+\langle f, x\rangle a$, a direct calculation reveals that $D=\delta_{(\langle g, e\rangle a-\langle g, a\rangle e)}$; as required.
\end{proof}
\vskip 0.4 true cm
\begin{remark} $(i)$ If we go through the proof of Theorem \ref{k} we see that the range of the derivation   $D(a)=\langle f-\varphi, a\rangle b_0$ lies in $_\varphi A$, and the same argument may apply to show that it  is not inner as a derivation from $_\varphi A$ to
$_\varphi A.$ This shows that $_\varphi A$ is not  $0-$weakly amenable, i.e. $H^1(_\varphi A,\ _\varphi A)\neq 0;$ see a remark just after {\cite[Proposition 1.2]{DGG}}. The same situation has occurred   in the proof of  Proposition \ref{p}.

$(ii)$ As $_\varphi A$ has a left identity and it is a left ideal in $(_\varphi A)^{**}$,   it is worthwhile mentioning that,  to prove the $(2n-1)-$weak amenability of $_\varphi A$ it suffices to show that $_\varphi A$ is weakly amenable; {\cite[Theorem 3]{Z}}, and this has already  done by  Dales {\em et al.} {\cite[Page 713]{DPV}}.
\end{remark}

\vskip 0.4 true cm

We have seen in the first part of the proof of Theorem \ref{k} that if  $D:\ _\varphi A \rightarrow (_\varphi A)^{(2n-1 )}$ is a derivation then $\langle D(a), b\rangle+\langle D(b), a\rangle=0, \ (a,b\in\ _\varphi A);$ and the latter is known as a cyclic derivation for the case $n=1.$ Therefore as a consequence of Theorem \ref{k} we get:
\begin{corollary} A bounded linear mapping $D:\ _\varphi A\rightarrow (_\varphi A)^*$ is a derivation if and only if it is a cyclic derivation. In particular, $_\varphi A$ is cyclicly amenable.
\end{corollary}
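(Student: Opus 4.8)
The statement to prove is that a bounded linear map $D:\ _\varphi A\rightarrow (_\varphi A)^*$ is a derivation if and only if it is cyclic, and consequently that $_\varphi A$ is cyclicly amenable. One direction is essentially already recorded in the excerpt: in the first part of the proof of Theorem \ref{k} (taken with $n=1$, so that $D$ maps into $(_\varphi A)^{(1)}=(_\varphi A)^*$) it is shown that any derivation $D:\ _\varphi A\rightarrow (_\varphi A)^*$ satisfies $\langle D(a),b\rangle+\langle D(b),a\rangle=0$ for all $a,b\in\ _\varphi A$, which is exactly the cyclicity condition. So the first half of the plan is merely to cite that computation.

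For the converse, I would start from the module identities in the $(2n-1)$ case with $n=1$, namely $D(a)b+aD(b)=\langle\varphi,b\rangle D(a)+\langle D(b),a\rangle\varphi$, and compare it with $D(ab)=\langle\varphi,a\rangle D(b)$. Assuming $D$ is cyclic, replace $\langle D(b),a\rangle$ by $-\langle D(a),b\rangle$; the claim then reduces to checking that $\langle\varphi,a\rangle D(b)=\langle\varphi,b\rangle D(a)-\langle D(a),b\rangle\varphi$ holds as an identity in $(_\varphi A)^*$. To verify this I would test both sides against an arbitrary $c\in\ _\varphi A$: the right-hand side paired with $c$ gives $\langle\varphi,b\rangle\langle D(a),c\rangle-\langle D(a),b\rangle\langle\varphi,c\rangle$, and using cyclicity twice (on $\langle D(a),c\rangle$ and on $\langle D(a),b\rangle$) one rewrites this so that it matches $\langle\varphi,a\rangle\langle D(b),c\rangle$. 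The key manoeuvre is that cyclicity lets one shuttle the argument of $D$ between the two slots, and the functional $\varphi$ plays a symmetric bookkeeping role; this is a short bilinear identity check rather than anything deep.

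Finally, for cyclic amenability I would invoke the $(2n-1)$-weak amenability part of Theorem \ref{k} with $n=1$: every derivation $D:\ _\varphi A\rightarrow (_\varphi A)^*$ is inner. Since by the equivalence just proved every bounded cyclic derivation is in particular a derivation, it is automatically inner, which is precisely the definition of cyclic amenability. I expect the only mildly delicate point to be the converse direction of the equivalence — making sure the cyclicity relation really does force the derivation identity and not merely something weaker — but since $_\varphi A$ has such a rigidly explicit product and the relevant dual module actions are all given by the displayed formulas, this reduces to a finite-dimensional-feeling linear algebra verification carried out by evaluating against a test element.
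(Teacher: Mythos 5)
Your forward direction and your deduction of cyclic amenability are exactly what the paper does: by the first computation in the proof of Theorem \ref{k} (with $n=1$), every derivation $D:{}_\varphi A\to({}_\varphi A)^*$ satisfies $\langle D(a),b\rangle+\langle D(b),a\rangle=0$, and since Theorem \ref{k} makes every such derivation inner, every cyclic derivation is inner and ${}_\varphi A$ is cyclicly amenable. The trouble is your converse. In the paper's terminology a ``cyclic derivation'' is by definition a derivation that happens to satisfy the cyclicity identity, so the ``if'' direction of the corollary is vacuous and requires no computation. You have instead undertaken to prove the stronger claim that any bounded linear map satisfying $\langle D(a),b\rangle=-\langle D(b),a\rangle$ for all $a,b$ is automatically a derivation, and that claim is false whenever $\dim(\ker\varphi)\geq 2$.

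Concretely, choose linearly independent $a_1,a_2\in\ker\varphi$ and $f_1,f_2\in A^*$ with $\langle f_i,a_j\rangle=\delta_{ij}$, and set $D(x)=\langle f_1,x\rangle f_2-\langle f_2,x\rangle f_1$. Then $\langle D(x),y\rangle$ is antisymmetric in $x,y$, so $D$ is ``cyclic'' in your sense; but $D(a_1a_2)=D(\langle\varphi,a_1\rangle a_2)=0$, whereas $D(a_1)\cdot a_2+a_1\cdot D(a_2)=\langle\varphi,a_2\rangle D(a_1)+\langle D(a_2),a_1\rangle\varphi=-\varphi\neq 0$. This is why the ``short bilinear identity check'' you describe cannot close: after applying cyclicity twice, the right-hand side tested against $c$ becomes $-\langle\varphi,b\rangle\langle D(c),a\rangle+\langle D(b),a\rangle\langle\varphi,c\rangle$, which does not reduce to $\langle\varphi,a\rangle\langle D(b),c\rangle$ without invoking the derivation identity itself (take $a=a_1$, $b=a_2$, $c$ a left identity in the example above to see the mismatch). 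If you read the converse as the paper intends --- a cyclic derivation is in particular a derivation --- your argument is complete and coincides with the paper's; as written, the middle paragraph of your plan asserts a false lemma.
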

\vskip 0.4 true cm

 We conclude with  the following list consist of some   miscellaneous properties of $_\varphi A$ which can be verified straightforwardly.

$\bullet$ If $\varphi=\lambda \psi$ for
some   $\lambda\in\mathbb{C}$ then trivially  $_\varphi A$ and  $_\psi A$ are isomorphic; indeed, the mapping $a\rightarrow\lambda a$ defines an isomorphism. However, the converse is not valid, in general. For instance, let $A$ be generated by two elements $a, b.$ Choose $\varphi, \psi\in A^*$ such that $\langle\varphi, a\rangle=\langle\psi, b\rangle=0$ and $\langle\varphi, b\rangle=\langle\psi, a\rangle=1,$ then $_\varphi A$ and  $_\psi A$ are isomorphic (indeed, $\alpha a+\beta b\rightarrow \alpha b+\beta a$
defines an isomorphism), however $\varphi$ and $\psi$ are linearly independent.

$\bullet$ It can be readily verified that $\{0\}\cup\{a\in\ _\varphi A,
  \varphi(a)=1 \}$ is the set of all idempotents of $_\varphi A$. Moreover, this is actually the set of all minimal idempotents of $_\varphi A$.

  $\bullet$ It is obvious that  every subspace  of $_\varphi A$ is a left ideal, while a subspace $I$ is a right ideal if and only if either $I=\ _\varphi A$ or $I \subseteq \ker\varphi.$ In particular,
$\ker\varphi$ is the unique maximal ideal in $_\varphi A$.

$\bullet$ A subspace $I$ of $_\varphi A$ is a modular left  ideal if and only if either $I= _\varphi A$ or $I= ker\varphi.$ In particular,
$\ker\varphi$ is the unique primitive ideal in $_\varphi A$ and this implies that  $rad(_\varphi A)=\ker\varphi$ and so  $_\varphi A$ is not semi-simple. Furthermore, for every non-zero proper closed ideal $I$, $rad(I)=I\cap rad(_\varphi A)=I\cap\ker\varphi=I.$

$\bullet$ A direct verification reveals that $LM(_\varphi A)=\mathbb{C}I$ and $RM(_\varphi A)=B(_\varphi A)$, where $LM$ and $RM$  are stand for the  left and right multipliers, respectively.

\end{document}